\documentclass{amsart}
\usepackage{graphicx}
\usepackage{latexsym}
\usepackage{hyperref}
\usepackage{amsfonts}
\usepackage[all]{xy}
\usepackage{amssymb, mathrsfs, amsfonts, amsmath}
\usepackage{amsbsy}
\usepackage{amsfonts}
\newtheorem{definition}{Definition}
\newtheorem{lemma}{Lemma}

\newtheorem{theorem}{Theorem}

\numberwithin{equation}{section}

\begin{document}
\let\WriteBookmarks\relax
\def\floatpagepagefraction{1}
\def\textpagefraction{.001}



\title [mode = title]{Four-dimensional electrostatic system with harmonic (anti-)self-dual Weyl tensor}  

%

%

\author[1]{Róbson Lousa}

%
%



%
%
%
%




\begin{abstract}
We investigate four-dimensional electrostatic systems arising as spatial factors of static Einstein--Maxwell spacetimes with cosmological constant. Assuming that the electric field is everywhere collinear with the gradient of the lapse function, we prove that the harmonicity of one of the (anti-)self-dual components of the Weyl tensor imposes strong rigidity on the underlying geometry. More precisely, we show that the gradient of the lapse function is an eigenvector of the Ricci tensor and that the regular level sets of the lapse function are totally umbilic with constant mean curvature. As a consequence, the manifold is locally conformally flat and admits a local warped product structure with one-dimensional base and three-dimensional fiber of constant curvature.
\end{abstract}





\maketitle

 	\section{Introduction and main results}
 
 The Hodge star operator plays a central role in four-dimensional Riemannian geometry. On an oriented $4$-manifold, it acts as an involution on $\Lambda^2$, and this leads to the classical splitting of the Weyl tensor into its self-dual and anti-self-dual parts (see, for instance, \cite[Chapter 13]{besse}):
 \[
 W \;=\; W^+ \oplus W^-.
 \]
 We say that $W^+$ is \emph{harmonic} if its divergence vanishes, $\delta W^+ = 0$, and similarly for $W^-$. Conditions on $W$ or on its (anti-)self-dual components are a natural way to control the conformal geometry of the underlying manifold: for example, $W \equiv 0$ if and only if the manifold is locally conformally flat in dimension $n\ge4$.
 
 In dimension four there is extensive literature relating vanishing or divergence conditions on (parts of) $W$ to strong classification results. Let us mention, among others, the works \cite{barros2015critical, catino2012note,deng2015note, catino2017gradient, neto2016generalized}, where (anti-)self-dual and harmonic Weyl tensors are studied in the context of quasi-Einstein metrics, Ricci solitons, and related structures. In particular, every four-dimensional Einstein manifold has harmonic $W^+$ (see \cite[16.65]{besse}), which motivates the question of how far one can push similar conclusions for more general classes. See also \cite{catino-mastrolia-monticelli-punzo, derdzinski1983, gursky2000, lebrun2021,   tran2017} for further rigidity results in the presence of harmonic (self-dual) Weyl curvature.
 
 The present approach is inspired by the theory of integrability conditions, introduced by Cao and Chen \cite{cao} in the study of gradient Ricci solitons and subsequently developed in several geometric contexts. In particular, suitable covariant 3-tensors encoding the interaction between the Cotton and Weyl tensors have proved to be a powerful tool in deriving rigidity results. Our construction follows this philosophy in the setting of four-dimensional electrostatic systems with cosmological constant \cite{cao,catino-mastrolia-monticelli-punzo,catino2017gradient}.

 In a different direction, electrostatic systems arise naturally as spatial slices of static Einstein--Maxwell spacetimes with cosmological constant. They provide a flexible geometric model that still retains a strong PDE structure. We refer to \cite{leandro2023geometry, leandro2024electrostatic, cederbaum2016uniqueness, chrusciel2005non, chrusciel2017non, tiarlos, kunduri2018, lucietti} for a systematic discussion and for several examples, including the charged Nariai models and Reissner--Nordstr\"om--de Sitter metrics.
 
 In our previous work with Leandro \cite{leandro2024electrostatic}, we considered the three-dimensional case and imposed a divergence-free Bach tensor condition. Under this hypothesis, and assuming that the electric field and gradient of the lapse function are linearly dependent, we proved that electrostatic manifolds are locally conformally flat and described their local warped product structure. 
 
 The aim of the present study is to investigate the four-dimensional counterpart of this picture under a different curvature hypothesis. Here we work in dimension four and replace the Bach condition from \cite{leandro2024electrostatic} with the harmonicity of one (anti-)self-dual part of the Weyl tensor. We show that this assumption, combined with the electrostatic equations, still forces local conformal flatness and a warped product structure, now with a three-dimensional constant curvature fiber.
 
 \medskip
 
 We begin by recalling the electrostatic system in arbitrary dimension $n\ge3$.
 
 \begin{definition}\label{def1}
 	Let $(M^n,g)$ be a Riemannian manifold and let $E$ be a vector field on $M$ (the electric field) and $f\in C^\infty(M)$ (the lapse function). We say that $(M^n,g,f,E)$ satisfies the \emph{electrostatic system with cosmological constant $\Lambda$} if
 	\begin{equation}\label{s1}
 		\begin{array}{rcl}
 			\nabla^2f&=&f\left(\mathrm{Ric}-\dfrac{2}{n-1}\Lambda g+2E^\flat\otimes E^\flat-\dfrac{2}{n-1}|E|^2g\right),\\\\
 			\Delta f&=&2f\left(\dfrac{n-2}{n-1}|E|^2-\dfrac{\Lambda}{n-1}\right),\\[0.2cm]
 			\mathrm{div}(E)&=&0,\qquad d(fE^\flat)=0.
 		\end{array}
 	\end{equation}
 	Here $\mathrm{Ric}$, $\nabla^2$, $\mathrm{div}$ and $\Delta$ denote, respectively, the Ricci tensor, Hessian, divergence and Laplacian with respect to $g$, and $E^\flat$ is the $g$-dual one-form associated with $E$.	The condition
 	$
 	d(fE^\flat)=0$
 	is the coordinate-free formulation of the Maxwell equation and corresponds to the vanishing of the curl of $fE$ in the three-dimensional setting.  The Riemannian manifold $(M^n,g)$ is called the \emph{spatial factor} of the corresponding static electrostatic spacetime.
 \end{definition}

 Throughout this paper we assume $f>0$ on $M$. When $M$ has boundary, it is standard to require $f^{-1}(0)=\partial M$ (see, e.g., \cite{chrusciel2005non, chrusciel2017non, tiarlos, kunduri2018}). Taking the trace of the first equation in \eqref{s1} and combining it with the Laplacian equation for $f$ yields a relation between the scalar curvature $R$, electric field and cosmological constant which does not depend on $n$:
 \begin{equation}\label{rrr}
 	R=2(|E|^2+\Lambda).
 \end{equation}
 
 It is also well known that the electrostatic system forces a strong alignment between the electric field and the lapse function: the equations imply that $E$ and $\nabla f$ are linearly dependent along the boundary $\partial M=f^{-1}(0)$ (see \cite[Lemma 4]{tiarlos}). Motivated by this and by the explicit models, we shall assume that this dependence holds on the whole manifold.
 
 More precisely, in what follows we work under the hypothesis that there exists a smooth function $\rho:M\to\mathbb{R}$ such that
 \[
 E=\rho\,\nabla f.
 \]

 As in \cite{leandro2024electrostatic}, once local conformal flatness is known, one can describe the geometry of the electrostatic manifold more precisely. In our four-dimensional setting we obtain the following warped product structure.
 
 \begin{theorem}\label{thm:ricci-alignment}
 	Let $(M^4,g,f,E)$ be a four--dimensional electrostatic system such that the electric field and the gradient of the lapse function are linearly dependent. Assume that one of the (anti-)self-dual parts of the Weyl tensor is harmonic, i.e.,
 	\[
 	\delta W^{\pm}=0.
 	\]
 	Then, at every regular point of $f$, the gradient $\nabla f$ is an eigenvector of the Ricci operator. In particular, the regular level sets of $f$ are totally umbilic hypersurfaces with constant mean curvature.
 \end{theorem}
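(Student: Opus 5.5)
Our plan follows the Cao--Chen philosophy: we build a covariant $3$-tensor $D$ from $f$, $\nabla f$ and the Ricci tensor, link it to the Cotton tensor $C$ and to $W(\nabla f,\cdot,\cdot,\cdot)$, and then use the self-dual splitting to show that $\delta W^{\pm}=0$ forces $D=0$.

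First we rewrite the system with $E=\rho\nabla f$. Then $E^\flat\otimes E^\flat=\rho^2 df\otimes df$, and the Maxwell equation $d(fE^\flat)=d(f\rho)\wedge df=0$ shows that $f\rho$, hence $\rho$ and $|E|^2=\rho^2|\nabla f|^2$, are locally functions of $f$ near regular points. The same holds for $R=2(|E|^2+\Lambda)$ by \eqref{rrr}. So the first equation reads
\[
f\,\mathrm{Ric}=\nabla^2 f-2f\rho^2\,df\otimes df+\psi\, g,
\]
with $\psi$ a function of $f$ and $|\nabla f|$. We differentiate this identity and skew-symmetrize in the first two indices. On the left, $\nabla_i(fR_{jk})-\nabla_j(fR_{ik})$ produces $f$ times the Cotton-type term $\nabla_iR_{jk}-\nabla_jR_{ik}$ plus $R_{jk}f_i-R_{ik}f_j$. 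On the right, the Ricci identity turns $\nabla_i\nabla_j\nabla_k f-\nabla_j\nabla_i\nabla_kf$ into $R_{ijkl}f_l$. The $df\otimes df$ and $\psi g$ terms contribute only expressions proportional to $f_if_jf_k$-type combinations and to $(g_{jk}f_i-g_{ik}f_j)$ times radial derivatives; these are symmetric enough to be absorbed. Decomposing $\mathrm{Rm}=W+\frac{1}{2}\mathrm{Ric}\owedge g-\frac{R}{12}g\owedge g$ then gives an identity
\[
f\,C_{ijk}=-W_{ijkl}\nabla_l f+\alpha\, D_{ijk},
\]
where $\alpha$ is a nonvanishing function. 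Here
\[
D_{ijk}=R_{jk}f_i-R_{ik}f_j+\tfrac{1}{3}(R_{il}f_lg_{jk}-R_{jl}f_lg_{ik})-\tfrac{R}{3}(g_{jk}f_i-g_{ik}f_j)
\]
(up to normalization), plus possibly extra terms of the form $(\mathrm{Ric}(\nabla f)-\lambda\nabla f)\wedge$, which vanish exactly when $\nabla f$ is a Ricci eigenvector. As in Cao--Chen, $|D|^2$ is a positive multiple of $|\nabla f|^2|\mathring{h}|^2$ plus a positive multiple of $|\mathrm{Ric}(\nabla f)^{\top}|^2$. Here $\mathring h$ is the traceless second fundamental form of the level set, computed from $\nabla^2 f/|\nabla f|$, which equals $f\,\mathrm{Ric}$ up to multiples of $g$ and $df\otimes df$.

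Second, we bring in the harmonic hypothesis. In dimension four, $\delta W^{\pm}$ is (up to a factor) the $\Lambda^{\pm}$-projection of $C$ in the $(i,j)$ slot, since $\delta W=-\frac12 C$. Likewise $W^{\pm}(\nabla f)$ is the $\pm$-projection of $W(\nabla f)$. Projecting the identity above onto $\Lambda^{\pm}$ and using $\delta W^{\pm}=0$ gives
\[
W^{\pm}_{ijkl}\nabla_l f=\alpha\, D^{\pm}_{ijk}.
\]
Contracting with $D^{\pm}$: the term $W^{\pm}_{ijkl}f_lD_{ijk}$ is computed from the explicit form of $D$. The pieces $R_{jk}f_i$ paired with $W_{ijkl}f_l$ give $W(\nabla f,e_j,e_k,\nabla f)R_{jk}$ type terms. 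Since $\nabla^2 f$ and $\mathrm{Ric}$ differ by multiples of $g$ and $df\otimes df$, and $W$ is traceless, this is $\frac{1}{f}W^{\pm}(\nabla f,\cdot,\nabla^2 f\cdot,\nabla f)$. Alternatively, and more robustly, we take the divergence of the identity $W^{\pm}(\nabla f)=\alpha D^{\pm}$ and integrate nothing, working pointwise. The key algebraic fact in dimension four is that the map $v\mapsto W^{\pm}(\cdot,\cdot,\cdot,v)$ and the map $X\mapsto (X\wedge v)^{\pm}$ are related by $|(X\wedge v)^{\pm}|^2=\frac12|X\wedge v|^2$. Thus $D^{\pm}=0$ already implies $D=0$ for tensors of the special form $\mathrm{Ric}\wedge df$, because $D_{ij\cdot}$ is a $2$-form built as $\omega\wedge df$, and a decomposable $2$-form with vanishing $\pm$ part vanishes.

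So the strategy is to prove $D^{\pm}=0$ and conclude $D=0$. To get $D^{\pm}=0$, we would show that $\langle W^{\pm}(\nabla f), D^{\pm}\rangle=0$ by the tracelessness of $W^{\pm}$ together with the fact that $D^{\pm}$ is of the form $(\eta_k\wedge df)^{\pm}$, with $\eta_k=\mathrm{Ric}(e_k)-\frac{R}{3}e_k+\dots$. Then $\sum_k W^{\pm}(\eta_k,\nabla f,e_k,\nabla f)$ reduces, via $\eta_k$ being symmetric and $W^{\pm}$ acting on $\Lambda^{\pm}$ as a trace-free symmetric endomorphism, to $\mathrm{tr}(W^{\pm}\circ\text{(something pure trace)})=0$. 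This step is the main obstacle: verifying that the pairing genuinely vanishes, or, failing that, replacing it by a divergence argument computing $\mathrm{div}$ of $W^{\pm}(\nabla f,\cdot,\cdot,\cdot)$ using $\delta W^{\pm}=0$. That divergence equals $\langle W^{\pm},\nabla^2 f\rangle$-type terms, which vanish on the $f\mathrm{Ric}$ part by the $4$-dimensional identity $W^{\pm}_{ijkl}R_{jk}$ contracted appropriately.

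Once $D=0$, the decomposition of $|D|^2$ gives $\mathrm{Ric}(\nabla f)^{\top}=0$, so $\nabla f$ is a Ricci eigenvector, and $\mathring h=0$, so the level sets are totally umbilic. For constant mean curvature, we use that $|\nabla f|^2$ is constant on level sets. Its tangential derivative is $2\nabla^2 f(\nabla f,X)=2f\,\mathrm{Ric}(\nabla f,X)+\dots=0$ by the eigenvector property and the $df\otimes df$ structure. The mean curvature is
\[
H=\frac{\Delta f-\nabla^2 f(\nu,\nu)}{|\nabla f|},
\]
and $\Delta f$ is a function of $f$ and $|E|^2$, hence constant on the level set. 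Also $\nabla^2f(\nu,\nu)=\frac{1}{2}\nu(|\nabla f|^2)/|\nabla f|$ is constant there by the eigenvalue relation combined with the contracted Codazzi equation for umbilic hypersurfaces, which yields $\nabla^{\top} H=0$ directly from $\mathrm{Ric}(\nu,X)=0$. Hence $H$ is constant on each regular level set.
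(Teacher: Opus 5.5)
Your proposal has a genuine gap at the step you flag as the main obstacle, and that step cannot be completed. Given $W^\pm_{ijkl}\nabla_lf=\alpha D^\pm_{ijk}$, the pairing $\langle W^\pm(\nabla f),D^\pm\rangle$ equals $\alpha|D^\pm|^2$, so showing it vanishes is the same as showing $D^\pm=0$. If you compute it directly, the $g$-terms of $D$ drop out by tracelessness of $W^\pm$, and what remains is $2\sum_{j,k}R_{jk}\,W^\pm(\nabla f,e_j,e_k,\nabla f)$. This vanishes when $\mathrm{Ric}$ restricted to $\nabla f^{\perp}$ is pure trace, i.e.\ when the level sets are already umbilic. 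So the trace argument is circular, and the divergence fallback is not an argument.

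In fact no argument from these hypotheses yields $D^\pm=0$. For $\Lambda>0$, take the product of round $2$-spheres of Gaussian curvatures $2\Lambda/3$ and $\Lambda/3$, with $E=0$ (so $\rho=0$) and $f$ a first spherical harmonic on the second factor, restricted to $\{f>0\}$. The system holds: $\nabla^2f=-\tfrac{\Lambda}{3}f\,g_2$ with $g_2$ the second factor's metric, and $R=2\Lambda$. The metric is locally symmetric, so $\delta W^\pm=0$, and $\nabla f$ is a Ricci eigenvector. But the level sets $S^2\times S^1$ have principal curvatures $0,0,\kappa$ with $\kappa\neq0$, so $D\neq0$.

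What your identity actually gives is that only $D^\pm(\cdot,\cdot,\nabla f)$ is forced to vanish. For $X\perp\nabla f$ one has $D^\pm(\cdot,\cdot,X)=\alpha^{-1}W^\pm(\cdot,\cdot,X,\nabla f)$, which need not be zero. The paper's proof likewise extracts only the eigenvector property from this identity. Its later formula $h_{ab}=\frac{f}{|\nabla f|}(\mu-\frac R3)g_{ab}$ simply assumes the three tangential Ricci eigenvalues are equal, and the example shows this umbilicity step is not available without further assumptions.

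What does work, and is essentially the paper's route, is to contract the projected identity with $\nabla f$ in the third slot rather than pairing it with $D^\pm$. Since $W^\pm_{ijkl}\nabla_kf\nabla_lf=0$ by skew-symmetry, you get $D^\pm(\cdot,\cdot,\nabla f)=0$. Moreover $D(\cdot,\cdot,\nabla f)=\tfrac23\,df\wedge\mathrm{Ric}(\nabla f)^\flat$ is decomposable, so your own observation $|\omega^+|=|\omega^-|$ for decomposable $\omega$ gives $df\wedge\mathrm{Ric}(\nabla f)^\flat=0$. The paper reaches the same conclusion through a case analysis in a Ricci eigenframe.

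You also need to justify $\alpha\neq0$. Here $\alpha$ is a multiple of $3-4f^2\rho^2$, which can vanish; the paper shows its zero set has empty interior using $\Lambda\neq0$.

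Constant mean curvature then follows from the eigenvector property alone, through your formula $H=(\Delta f-\nabla^2f(\nu,\nu))/|\nabla f|$. Both $\rho$ and $|\nabla f|$ are locally functions of $f$, hence so are $\Delta f$ and $\nabla^2f(\nu,\nu)=\tfrac12\,\tfrac{d}{df}|\nabla f|^2$. No Codazzi argument or umbilicity is needed.

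Finally, your claim that $|E|^2$ is locally a function of $f$ does not follow from the Maxwell equation alone. It does hold where $\rho\Lambda\neq0$: insert $\nabla\rho=\rho'(f)\nabla f$ into $\operatorname{div}E=0$ and solve for $|\nabla f|^2$. Then $f\,\mathrm{Ric}(\nabla f,X)=\nabla^2f(\nabla f,X)=\tfrac12X(|\nabla f|^2)=0$ for $X\perp\nabla f$, which gives the eigenvector property there without using the Weyl hypothesis at all.
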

 
 Theorem~\ref{thm:ricci-alignment} shows that, in dimension four, the harmonicity of one of the self-dual or anti-self-dual components of the Weyl tensor plays a role analogous to the  divergence-free Bach tensor condition in dimension three.
 In both settings, this curvature hypothesis enforces an alignment between the Ricci tensor  and the gradient of the lapse function, leading to strong geometric rigidity of  electrostatic manifolds.

 \begin{theorem}\label{thm:warped-product}
 	Let $(M^4,g,f,E)$ be a four--dimensional electrostatic system such that the electric field and the gradient of the lapse function are linearly dependent. Suppose that the hypotheses of Theorem~\ref{thm:ricci-alignment} hold.  
 	If $f$ is a proper function, then in a neighborhood of any regular point of $f$ the metric $g$ is locally conformally flat and $(M^4,g)$ admits a local warped product decomposition
 	\[
 	(M^4,g)\simeq (I,dr^2)\times_{\varphi}(N^3,g_N),
 	\]
 	where $I\subset\mathbb{R}$ is an interval, $\varphi:I\to(0,\infty)$ is a smooth warping function, and $(N^3,g_N)$ is a three--dimensional Riemannian manifold of constant curvature.
 \end{theorem}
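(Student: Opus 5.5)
We need to prove Theorem~\ref{thm:warped-product}: starting from Theorem~\ref{thm:ricci-alignment}, we want local conformal flatness and a warped product structure with a 3-dimensional fiber of constant curvature.

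\emph{Step 1: the structure of $f$ near its level sets.} By Theorem~\ref{thm:ricci-alignment}, near a regular point the level sets $\Sigma_c=f^{-1}(c)$ are totally umbilic with constant mean curvature. Set $E=\rho\nabla f$. The Maxwell equation $d(fE^\flat)=0$ becomes $d(f\rho)\wedge df=0$, so $f\rho$ is locally constant on level sets, i.e.\ $\rho=\rho(f)$. Hence $|E|^2=\rho^2|\nabla f|^2$.

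Next I show that $|\nabla f|$ is constant on each regular level set. From the Hessian equation and the Ricci eigenvector property we get
\[
\nabla^2 f(\nabla f,X)=f\big(\mathrm{Ric}+2E^\flat\otimes E^\flat\big)(\nabla f,X)=0
\]
for $X$ tangent to $\Sigma_c$. Since $\tfrac12 X(|\nabla f|^2)=\nabla^2 f(\nabla f,X)=0$, $|\nabla f|$ is constant on $\Sigma_c$. Properness of $f$ ensures that level sets near a regular point are compact and that the flow of $\nabla f/|\nabla f|^2$ is well defined, giving coordinates $(r,\theta)$ with $g=dr^2+g_r$ and $f=f(r)$.

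\emph{Step 2: the warped product.} Umbilicity means that the second fundamental form of $\Sigma_r$ is $\tfrac{H(r)}{3}g_r$, so $\partial_r g_r=\tfrac{2H}{3}g_r$. Integrating gives $g_r=\varphi(r)^2g_N$ with $\varphi'/\varphi=H/3$. Since $H$ depends only on $r$, this yields $g=dr^2+\varphi(r)^2g_N$.

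\emph{Step 3: the fiber has constant curvature.} The Ricci tensor of a warped product restricted to the fiber directions is
\[
\mathrm{Ric}_N-\big(\varphi\varphi''+2\varphi'^2\big)g_N.
\]
On the other hand, restricting the electrostatic equation to tangent vectors of $\Sigma_r$ gives
\[
\nabla^2 f=f'\varphi\varphi' g_N = f\Big(\mathrm{Ric}-\tfrac{2}{3}\Lambda g-\tfrac{2}{3}|E|^2g\Big),
\]
using $E^\flat(X)=0$ there. Since $f>0$, this forces $\mathrm{Ric}_N=\lambda(r)\,g_N$. Because $\mathrm{Ric}_N$ does not depend on $r$, $\lambda$ is constant, so $N^3$ is Einstein. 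A three-dimensional Einstein manifold has constant sectional curvature.

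\emph{Step 4: local conformal flatness.} A warped product $I\times_\varphi N$ over a one-dimensional base with a constant-curvature fiber is locally conformally flat. Indeed, writing $dr^2+\varphi^2g_N=\varphi^2(ds^2+g_N)$ with $ds=dr/\varphi$, the metric $ds^2+g_N$ is a product of an interval with a space form, hence conformally flat.

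\emph{Main obstacle.} The delicate point is Step 1: establishing the global-in-leaf constancy of $|\nabla f|$ and $\rho$, and ensuring that properness gives a genuine tubular neighborhood foliated by connected level sets, so that all quantities are genuinely functions of $r$ alone. To address this, I would restrict to a connected component of the regular set and flow a compact level set by the normalized gradient flow.
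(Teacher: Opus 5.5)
Your proof is correct, granting Theorem~\ref{thm:ricci-alignment} as stated. It shares the paper's skeleton: constancy of $|\nabla f|$ on level sets from the Hessian equation plus the eigenvector property, adapted coordinates $g=dr^2+g_r$, and integration of the umbilicity relation to get $g=dr^2+\varphi^2g_N$.

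The routes differ in how the fiber is identified. The paper computes the scalar curvature $\overline{R}$ of $N$ from the warped-product formula, $R=2(|E|^2+\Lambda)$, and an expression for $\rho^2$ taken from the radial component of the Hessian equation. It shows $\overline{R}$ does not depend on $\theta$, and then writes $\overline{R}_{ab}=\frac{\overline{R}}{3}\overline{g}_{ab}$. That last step does not follow from constant scalar curvature; it holds only because the tangential Ricci tensor was already taken to be isotropic. You instead restrict the Hessian equation to $T\Sigma_r$, where $E^\flat$ vanishes, and read off $\mathrm{Ric}_N=\lambda g_N$ directly. This gives the Einstein condition, hence constant curvature in dimension three, at once, and makes the paper's radial ODEs for $\rho^2$ and $\varphi$ unnecessary. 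You also take constant mean curvature directly from Theorem~\ref{thm:ricci-alignment}, whereas the paper rederives it from the contracted Codazzi equation and $R_{1a}=0$. Finally, your substitution $ds=dr/\varphi$, which reduces the metric to a product of an interval with a space form, supplies the local conformal flatness that the paper only asserts.

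Two smaller remarks. First, ``$\mathrm{Ric}_N$ does not depend on $r$, so $\lambda$ is constant'' only shows that $\lambda$ is independent of $r$. You also need $\lambda$ to depend only on $r$, which your Step~1 fact $\rho=\rho(f)$ provides; alternatively, Schur's lemma settles it. Second, the real load-bearing input is not the properness issue you single out. The statement is local, and neither you nor the paper actually uses properness. The load-bearing input is umbilicity. By the Hessian equation, $h=\frac{f}{|\nabla f|}\big(\mathrm{Ric}-\frac{R}{3}g\big)|_{T\Sigma}$, so umbilicity means the three tangential Ricci eigenvalues coincide. That is not implied by $\nabla f$ being a Ricci eigenvector, which is the only conclusion the paper's proof of Theorem~\ref{thm:ricci-alignment} actually reaches. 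Both your argument and the paper's rest on Theorem~\ref{thm:ricci-alignment} supplying this extra information.
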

 If $M^4$ is compact, the assumption that $f$ be proper can be removed.
 
 This result provides a four-dimensional rigidity theorem under a natural
 curvature condition on the Weyl tensor, extending previous results obtained
 under Bach-flat assumptions. The proof is based on an integrability condition relating the Cotton tensor and the self-dual part of the Weyl tensor. This strategy originates in the work of Cao and Chen \cite{cao} on gradient Ricci solitons and has been successfully adapted to several geometric settings by Catino, Mastrolia, Monticelli and collaborators \cite{catino-mastrolia-monticelli-punzo,catino2017gradient}. In the present work we derive the corresponding identity for electrostatic systems with cosmological constant by introducing a tensor naturally associated with the Einstein–Maxwell equations.

 \section{Preliminaries and Integrability Identities}\label{lemmas}
 
 In this section we derive the fundamental identities that will be used throughout the proof of the main results. In particular, we recall the construction of a covariant $V$-tensor adapted to the electrostatic structure, following \cite{leandro2023geometry, leandro2024electrostatic} and the four-dimensional setting of \cite{barros2015critical, catino2012note, neto2016generalized}.
 
 Throughout this section $(M^4,g,f,E)$ denotes a four-dimensional electrostatic system with $E=\rho\nabla f$ as above.
 
 Combining \eqref{s1} with \eqref{rrr} in dimension $n=4$ gives
 \begin{equation*}
 	\nabla^2f=f\left(\mathrm{Ric}+2E^\flat\otimes E^\flat -\dfrac{R}{3}g\right).
 \end{equation*}
 To fix the notation, let $\{e_i\}_{i=1}^4$ be a local orthonormal frame on $M$ (see \cite{leandro2024electrostatic}). Then, this equation can be written in local coordinates as
 \begin{equation}\label{combinado}
 	\nabla_i\nabla_kf
 	=
 	f\left(
 	R_{ik}
 	+2E^\flat_iE^\flat_k
 	-\frac{R}{3}g_{ik}
 	\right),
 \end{equation}
 where $E^\flat_i:=E^\flat(e_i)$.
 
 Note that $d(fE^\flat)=0$ implies $$f\left(\nabla_iE^\flat_j-\nabla_jE^\flat_i\right)
 =
 \nabla_jf\,E^\flat_i
 -
 \nabla_if\,E^\flat_j,$$ taking the covariant derivative of \eqref{combinado} and using the Ricci identity yields
\begin{align}
	R_{ijkl}\nabla^lf
	={}&\,\nabla_i\nabla_j\nabla_kf
	-\nabla_j\nabla_i\nabla_kf \nonumber\\
	={}&\,
	\left(R_{jk}\nabla_if-R_{ik}\nabla_jf\right)\nonumber\\
	&-\frac{R}{3}
	\left(\nabla_if\,g_{jk}
	-\nabla_jf\,g_{ik}\right)\nonumber\\
	&+f\left(\nabla_iR_{jk}
	-\nabla_jR_{ik}\right)\nonumber\\
	&+2f\left(
	E^\flat_j\nabla_iE^\flat_k
	-E^\flat_i\nabla_jE^\flat_k
	\right)\nonumber\\
	&-\frac{f}{3}
	\left(\nabla_iR\,g_{jk}
	-\nabla_jR\,g_{ik}\right).\nonumber
\end{align}
 
 The Cotton tensor on a four-dimensional Riemannian manifold is defined by
 \begin{equation}\label{ct}
 	C_{ijk}=\nabla_iR_{jk}-\nabla_jR_{ik}-\frac{1}{6}(\nabla_iRg_{jk}-\nabla_jRg_{ik}).
 \end{equation}
 
 Combining the Ricci commutation formula with \eqref{ct}, we obtain the identity
 
 \begin{align*}
 	R_{ijkl}\nabla^lf
 	={}&\,fC_{ijk}
 	+\left(R_{jk}\nabla_if-R_{ik}\nabla_jf\right)\nonumber\\
 	&-\frac{R}{3}
 	\left(\nabla_if\,g_{jk}
 	-\nabla_jf\,g_{ik}\right)\nonumber\\
 	&+2f\left(
 	E^\flat_j\nabla_iE^\flat_k
 	-E^\flat_i\nabla_jE^\flat_k
 	\right)\nonumber\\
 	&-\frac{f}{6}
 	\left(
 	\nabla_iR\,g_{jk}
 	-\nabla_jR\,g_{ik}
 	\right).
 \end{align*}

On the other hand, the Riemann curvature tensor admits the Weyl $W$ decomposition (see, e.g., \cite[Chapter 1]{besse})
 \[
 \begin{aligned}
 	R_{ijkl}\nabla^{l}f
 	=
 	W_{ijkl}\nabla^{l}f
 	&+\frac{1}{2}\left(R_{ik}\nabla_{j}f-R_{jk}\nabla_{i}f\right)\\
 	&-\frac{1}{2}\left(R_{il}\nabla^{l}f\,g_{jk}
 	-R_{jl}\nabla^{l}f\,g_{ik}\right)\\
 	&-\frac{R}{6}\left(\nabla_{j}f\,g_{ik}
 	-\nabla_{i}f\,g_{jk}\right).
 \end{aligned}
 \]
 Equating the two expressions for $R_{ijkl}\nabla^l f$, we obtain
 \begin{equation}\label{construcao1}
 	\begin{split}
 		fC_{ijk}
 		={}&\,
 		W_{ijkl}\nabla^{l}f + \frac{3}{2}
 		\left(
 		R_{ik}\nabla_jf
 		-
 		R_{jk}\nabla_if
 		\right)\\
 		&-\frac12
 		\left(
 		R_{il}\nabla^{l}f\,g_{jk}
 		-
 		R_{jl}\nabla^{l}f\,g_{ik}
 		\right)\\
 		&+\frac{R}{2}
 		\left(
 		\nabla_if\,g_{jk}
 		-
 		\nabla_jf\,g_{ik}
 		\right)\\
 		&-2f
 		\left(
 		E^\flat_j\nabla_iE^\flat_k
 		-
 		E^\flat_i\nabla_jE^\flat_k
 		\right)\\
 		&+\frac{f}{6}
 		\left(
 		\nabla_iR\,g_{jk}
 		-
 		\nabla_jR\,g_{ik}
 		\right).
 	\end{split}
 \end{equation}

 Motivated by the integrability conditions introduced for gradient Ricci solitons, we define the following covariant $3-$tensor $V$

\begin{equation}\label{tt}
	\begin{split}
		V_{ijk}
		={}&\, \frac{3}{2}
		\left(
		R_{ik}\nabla_jf
		-
		R_{jk}\nabla_if
		\right)\\
		&-\frac12
		\left(
		R_{il}\nabla^{l}f\,g_{jk}
		-
		R_{jl}\nabla^{l}f\,g_{ik}
		\right)\\
		&+\frac{R}{2}
		\left(
		\nabla_if\,g_{jk}
		-
		\nabla_jf\,g_{ik}
		\right)\\
		&-2f
		\left(
		E^\flat_j\nabla_iE^\flat_k
		-
		E^\flat_i\nabla_jE^\flat_k
		\right)\\
		&+\frac{f}{6}
		\left(
		\nabla_iR\,g_{jk}
		-
		\nabla_jR\,g_{ik}
		\right).
	\end{split}
\end{equation}

 The tensor $V$ is clearly skew-symmetric in its first two indices. Moreover, using $\mathrm{div}(E)=0$ together with the fact $R=2(|E|^2+\Lambda)$, one verifies that $V$ is trace-free. Hence $V$ has the same algebraic symmetries as the Cotton tensor. Consequently, identity \eqref{construcao1} takes the form
 
 \begin{lemma}\label{lemma1}
 	Let $(M^4,\,g,\,f,\,E)$ be a four-dimensional electrostatic system. Then 
 	\begin{equation}\label{ttt3}
 		fC_{ijk}=W_{ijkl}\nabla^lf+V_{ijk}.
 	\end{equation}
 \end{lemma}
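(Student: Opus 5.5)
The identity \eqref{ttt3} is essentially a bookkeeping consequence of the two expressions for $R_{ijkl}\nabla^l f$ derived above, so the proof will assemble them in three steps.

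\textbf{Step 1: the third-derivative identity.} Differentiate \eqref{combinado} covariantly in the index $j$, antisymmetrize in $i,j$, and use the Ricci identity
\[
\nabla_i\nabla_j\nabla_kf-\nabla_j\nabla_i\nabla_kf=R_{ijkl}\nabla^lf
\]
(with the paper's sign convention). On the right-hand side three kinds of terms appear:
\begin{itemize}
\item terms with the derivative falling on $f$, which produce $(R_{jk}+2E^\flat_jE^\flat_k-\tfrac R3 g_{jk})\nabla_if$ minus the $i\leftrightarrow j$ swap;
\item terms with the derivative falling on $\mathrm{Ric}$ and on $R$;
\item terms with the derivative falling on $E^\flat\otimes E^\flat$.
\end{itemize}
For the last group, the product rule gives $\nabla_iE^\flat_j\,E^\flat_k+E^\flat_j\nabla_iE^\flat_k$ minus its swap. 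The first summand, antisymmetrized, is $f^{-1}(\nabla_jf\,E^\flat_i-\nabla_if\,E^\flat_j)E^\flat_k$ by the Maxwell equation $d(fE^\flat)=0$, which is stated above. After multiplying by $f$, this exactly cancels the $2E^\flat_jE^\flat_k\nabla_if - 2E^\flat_iE^\flat_k\nabla_jf$ terms from the first group. That cancellation yields the displayed formula with only $2f(E^\flat_j\nabla_iE^\flat_k-E^\flat_i\nabla_jE^\flat_k)$ surviving. I expect this cancellation to be the only delicate point, since one must track the factor $f$ and the signs carefully. Note that it holds without using $E=\rho\nabla f$.

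\textbf{Step 2: introduce the Cotton tensor.} Substitute $\nabla_iR_{jk}-\nabla_jR_{ik}=C_{ijk}+\tfrac16(\nabla_iR\,g_{jk}-\nabla_jR\,g_{ik})$ from \eqref{ct}. Combining $-\tfrac f3$ with $+\tfrac f6$ gives the $-\tfrac f6$ term in the second expression.

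\textbf{Step 3: compare with the Weyl decomposition and match $V$.} Use the four-dimensional Weyl decomposition
\[
R_{ijkl}=W_{ijkl}+\tfrac12(R_{ik}g_{jl}-R_{il}g_{jk}+R_{jl}g_{ik}-R_{jk}g_{il})-\tfrac R6(g_{ik}g_{jl}-g_{il}g_{jk}),
\]
contracted with $\nabla^lf$. Equate this with the expression from Step 2 and solve for $fC_{ijk}$, collecting coefficients term by term:
\begin{itemize}
\item the $R_{ik}\nabla_jf$ terms give $\tfrac12+1=\tfrac32$;
\item the scalar terms give $\tfrac R3+\tfrac R6=\tfrac R2$;
\item the $E$-terms and the $\nabla R$ terms change sign.
\end{itemize}
This is precisely \eqref{construcao1}. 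Since the non-Weyl part of \eqref{construcao1} is by definition \eqref{tt}, we obtain $fC_{ijk}=W_{ijkl}\nabla^lf+V_{ijk}$.

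As a consistency check, both $C$ and $W_{ijkl}\nabla^lf$ are skew in $i,j$ and trace-free, so $V$ must be as well, which matches the remark preceding the lemma.
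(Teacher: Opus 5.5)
Your proposal is correct and follows the paper's own derivation step by step. You differentiate \eqref{combinado}, use the Ricci identity together with $d(fE^\flat)=0$ to cancel the $E^\flat\otimes E^\flat$ terms, substitute the Cotton tensor, and compare with the four-dimensional Weyl decomposition to obtain \eqref{construcao1}, whose non-Weyl part is $V$ by definition. Your explicit treatment of the Maxwell cancellation and of the coefficient bookkeeping ($\tfrac32$, $\tfrac R2$, the sign changes) just fills in details the paper leaves implicit.
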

 
 \begin{lemma}\label{lemma2}
 	Let $(M^4,\,g,\,f,\,E)$ be a four-dimensional electrostatic system such that $E=\rho\nabla f$ for some smooth function $\rho$. Then the $V-$tensor is given by
 \begin{equation}\label{simpcom}
 	\begin{split}
 		V_{ijk}
 		={}&\,
 		\frac{3-4f^2\rho^2}{2}
 		\left(
 		R_{ik}\nabla_jf
 		-
 		R_{jk}\nabla_if
 		\right)\\
 		&-\frac{3-4f^2\rho^2}{6}
 		\left(
 		R_{il}\nabla^lf\,g_{jk}
 		-
 		R_{jl}\nabla^lf\,g_{ik}
 		\right)\\
 		&+\left(
 		\frac{3-4f^2\rho^2}{6}R
 		\right)
 		\left(
 		\nabla_if\,g_{jk}
 		-
 		\nabla_jf\,g_{ik}
 		\right).
 	\end{split}
 \end{equation}
Moreover, Lemma \ref{lemma1} also holds for the new tensor $V$.
 \end{lemma}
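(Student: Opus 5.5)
The plan is to substitute $E=\rho\nabla f$ directly into the definition \eqref{tt} of $V$. I would rewrite the two terms that involve $E$ and $\nabla R$ using the Maxwell equation and the Hessian equation \eqref{combinado}. Any leftover piece proportional to $\nabla_if\,g_{jk}-\nabla_jf\,g_{ik}$ should then be eliminated by taking a trace of Lemma~\ref{lemma1}.

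\emph{Step 1 (Maxwell).} Since $E^\flat=\rho\, df$, we have $d(fE^\flat)=d(f\rho)\wedge df=0$. Hence $\nabla(f\rho)$ is parallel to $\nabla f$. Because $f>0$, it follows that $f\nabla\rho=\nabla(f\rho)-\rho\nabla f$ is also parallel to $\nabla f$, so on the regular set
\[
\nabla_i\rho=\sigma\nabla_if \quad\text{for some function } \sigma.
\]
In particular $\nabla_i\rho\nabla_jf-\nabla_j\rho\nabla_if=0$.

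\emph{Step 2 (the $E\nabla E$ term).} Write $\nabla_iE^\flat_k=\nabla_i\rho\,\nabla_kf+\rho\nabla_i\nabla_kf$. Then
\[
E^\flat_j\nabla_iE^\flat_k-E^\flat_i\nabla_jE^\flat_k=\rho\nabla_kf\,(\nabla_i\rho\nabla_jf-\nabla_j\rho\nabla_if)+\rho^2(\nabla_jf\nabla_i\nabla_kf-\nabla_if\nabla_j\nabla_kf).
\]
The first summand vanishes by Step 1. Inserting \eqref{combinado} into the second summand, the $2\rho^2\nabla f\otimes\nabla f$ contributions cancel. We are left with
\[
f\rho^2\Big[(R_{ik}\nabla_jf-R_{jk}\nabla_if)-\tfrac R3(\nabla_jf\,g_{ik}-\nabla_if\,g_{jk})\Big].
\]
Multiplying by $-2f$ and adding the corresponding terms of \eqref{tt} produces the coefficients $\tfrac32-2f^2\rho^2=\tfrac{3-4f^2\rho^2}{2}$ and $\tfrac R2-\tfrac{2f^2\rho^2R}{3}=\tfrac{3-4f^2\rho^2}{6}R$, as claimed.

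\emph{Step 3 (the $\nabla R$ term).} From \eqref{rrr}, $\nabla_iR=4E^l\nabla_iE^\flat_l$. Using Steps 1--2 and \eqref{combinado},
\[
\nabla_iR=4f\rho^2R_{il}\nabla^lf+b\,\nabla_if
\]
for an explicit scalar $b$ built from $\rho,\sigma,f,|\nabla f|^2,R$. Thus $\tfrac f6(\nabla_iR\,g_{jk}-\nabla_jR\,g_{ik})$ contributes $\tfrac{2f^2\rho^2}{3}(R_{il}\nabla^lf\,g_{jk}-R_{jl}\nabla^lf\,g_{ik})$. Combined with the $-\tfrac12$ term in \eqref{tt}, this gives $-\tfrac{3-4f^2\rho^2}{6}$, exactly as in \eqref{simpcom}. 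It also leaves an extra term $a(\nabla_if\,g_{jk}-\nabla_jf\,g_{ik})$ with $a=fb/6$.

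\emph{Step 4 (removing the leftover).} I expect this to be the main obstacle, since computing $b$ explicitly and showing that it vanishes looks messy. Instead I would argue by traces. Let $\tilde V$ denote the right-hand side of \eqref{simpcom}, so that $V=\tilde V+a(\nabla_if\,g_{jk}-\nabla_jf\,g_{ik})$. A direct check shows that $\tilde V$ is trace-free in $j,k$: the Ricci terms give $\tfrac{3-4f^2\rho^2}{2}(1-\tfrac13\cdot 3)\,R_{il}\nabla^lf$-type cancellations, and the remaining pieces cancel similarly. The tensors $C$ and $W_{ijkl}\nabla^lf$ are also trace-free in $j,k$. Tracing Lemma~\ref{lemma1} over $j=k$ therefore gives $3a\nabla_if=0$. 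Hence $a=0$ at regular points, and by continuity on the closure of the regular set; at critical points the extra term vanishes anyway since $\nabla f=0$. So $V=\tilde V$, and Lemma~\ref{lemma1} holds with $V$ given by \eqref{simpcom}.
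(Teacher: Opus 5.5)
Your proof is correct. Steps 1--3 are essentially the paper's computation. The paper also expands $\nabla_iE^\flat_k$ and $\nabla_iR$ through \eqref{combinado}; it first obtains \eqref{simp}, keeping the $\nabla\rho$-terms, and only afterwards uses $d(fE^\flat)=0$ to make $\nabla\rho$ parallel to $\nabla f$.

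The genuine difference is how you remove the residual multiple of $\nabla_if\,g_{jk}-\nabla_jf\,g_{ik}$.

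\emph{The paper's route.} It uses $\operatorname{div}E=0$ to write $|\nabla f|^2\nabla\rho=-\rho\,\Delta f\,\nabla f$. It then substitutes the equation for $\Delta f$ and \eqref{rrr}, and checks by hand that the coefficient collapses to $\frac{3-4f^2\rho^2}{6}R$. Equivalently, it shows that your $b=4\rho\langle\nabla\rho,\nabla f\rangle+8f\rho^4|\nabla f|^2-\frac43 f\rho^2R$ vanishes.

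\emph{Your route.} You never compute $b$. You observe that $V$ and $\tilde V$ are both trace-free in $(j,k)$, while the pure-trace piece has trace $3a\nabla_if$. This is shorter and explains structurally why the residual must vanish. The paper's computation, in exchange, shows explicitly which field equations are responsible.

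The underlying information is the same. The trace-freeness of $V$ that you extract from Lemma~\ref{lemma1} comes from the contracted Bianchi identity hidden in $g^{jk}C_{ijk}=0$. The paper obtains the same fact directly from $\operatorname{div}E=0$ and \eqref{rrr} in the remark preceding Lemma~\ref{lemma1}. So you could equally trace \eqref{tt} directly, without passing through Lemma~\ref{lemma1}.

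A small tidy-up: since $f\,d\rho\wedge df=d(f\rho)\wedge df=0$ and $f>0$, you have $d\rho\wedge df=0$ on all of $M$. Hence $|\nabla f|^2\nabla\rho=\langle\nabla\rho,\nabla f\rangle\nabla f$ holds globally. This makes $b$ and $a$ smooth functions everywhere, so you can pass across critical points of $f$ without introducing $\sigma$.
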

 \begin{proof}
 	Since the electric field and the lapse function are linearly dependent, there exists a smooth function $\rho$ such that $$E = \rho\nabla f.$$ Differentiating $E=\rho\nabla f$ and using \eqref{combinado}, we get
 	\[
 	\begin{aligned}
 		\nabla_iE^\flat_k
 		&=\nabla_i(\rho\nabla_kf)\\
 		&=\nabla_i\rho\,\nabla_kf
 		+\rho\nabla_i\nabla_kf\\
 		&=\nabla_i\rho\,\nabla_kf
 		+\rho f\left(
 		R_{ik}
 		+2\rho^2\nabla_if\nabla_kf
 		-\frac{R}{3}g_{ik}
 		\right)\\
 		&=\nabla_i\rho\,\nabla_kf
 		+f\rho R_{ik}
 		+2f\rho^3\nabla_if\nabla_kf
 		-\frac{f\rho R}{3}g_{ik}.
 	\end{aligned}
 	\]

 	We next compute the gradient of the scalar curvature. Differentiating \eqref{rrr} and using $E=\rho\nabla f$ we obtain
 	 \[
 	 \nabla_i R=2\nabla_i|E|^2
 	 =4\rho|\nabla f|^2\nabla_i\rho+2\rho^2\nabla_i|\nabla f|^2.
 	 \]
 	 From \eqref{combinado},
 	 \[
 	 \nabla_i|\nabla f|^2
 	 =2f\left(R_{il}\nabla^l f+2\rho^2|\nabla f|^2\nabla_i f-\frac{R}{3}\nabla_i f\right).
 	 \]
 	 Combining these expressions and using \eqref{rrr}, we find
 	 
 	 \[
 	 \begin{aligned}
 	 	\nabla_iR
 	 	={}&\,4\rho|\nabla f|^2\nabla_i\rho
 	 	+4f\rho^2R_{il}\nabla^lf\\
 	 	&+8f\rho^4|\nabla f|^2\nabla_if
 	 	-\frac{4f\rho^2R}{3}\nabla_if.
 	\end{aligned}
 	 \]

 Substituting the above identities into \eqref{tt} we obtain
 	 
 \begin{equation}\label{simp}
 	\begin{split}
 		V_{ijk}
 		={}&\,
 		\frac{3-4f^2\rho^2}{2}
 		\left(
 		R_{ik}\nabla_jf
 		-
 		R_{jk}\nabla_if
 		\right)\\
 		&-\frac{3-4f^2\rho^2}{6}
 		\left(
 		R_{il}\nabla^lf\,g_{jk}
 		-
 		R_{jl}\nabla^lf\,g_{ik}
 		\right)\\
 		&+\frac{2f\rho|\nabla f|^2}{3}
 		\left(
 		\nabla_i\rho\,g_{jk}
 		-
 		\nabla_j\rho\,g_{ik}
 		\right)\\
 		&-2f\rho
 		\left(
 		\nabla_i\rho\,\nabla_jf\,\nabla_kf
 		-
 		\nabla_j\rho\,\nabla_if\,\nabla_kf
 		\right)\\
 		&+\left(
 		\frac{9-16f^2\rho^2}{18}R
 		+\frac{4f^2\rho^4|\nabla f|^2}{3}
 		\right)
 		\left(
 		\nabla_if\,g_{jk}
 		-
 		\nabla_jf\,g_{ik}
 		\right).
 	\end{split}
 \end{equation}
 
 Now, considering that $\mathrm{div}(E)=0$ \eqref{s1}, we have
 \[
 0=\operatorname{div}(\rho\nabla f)
 =\nabla_i(\rho\nabla^if)
 =\nabla_i\rho\,\nabla^if+\rho\,\Delta f.
 \]
 Hence,
 \[
 \langle\nabla\rho,\nabla f\rangle
 =-\rho\,\Delta f.
 \]
 
 Moreover, since
 \[
 d(fE^\flat)=0,
 \]
 it follows that
 \[
 \nabla_i\rho\,\nabla_jf
 -
 \nabla_j\rho\,\nabla_if=0,
 \]
 so that \(\nabla\rho\) is parallel to \(\nabla f\) (see \cite{leandro2024electrostatic}). Therefore, there exists a smooth function \(\mu\) such that
 \[
 \nabla\rho=\mu\nabla f.
 \]
 Taking the inner product with \(\nabla f\), we obtain
 \[
 \mu|\nabla f|^2
 =
 -\rho\,\Delta f,
 \]
 that is,
 \[
 |\nabla f|^2\nabla\rho
 =
 -\rho\,\Delta f\nabla f.
 \]
 Using the Laplacian equation for $f$ of \eqref{s1}, we get that
 \[
 |\nabla f|^2\nabla\rho
 =
 -\frac{2\rho f}{3}
 \left(
 2\rho^2|\nabla f|^2-\Lambda
 \right)\nabla f,
 \]

 Finally, using $E=\rho\nabla f$ in \eqref{rrr}, we obtain
 
 \[
 |\nabla f|^2\nabla\rho
 =
 -\frac{2\rho f}{3}
 \left(
 3\rho^2|\nabla f|^2-\dfrac{R}{2}
 \right)\nabla f,
 \]
Therefore, substituting these identities into \eqref{simp} yields 

\begin{equation*}
	\begin{split}
		V_{ijk}
		={}&\,
		\frac{3-4f^2\rho^2}{2}
		\left(
		R_{ik}\nabla_jf
		-
		R_{jk}\nabla_if
		\right)\\
		&-\frac{3-4f^2\rho^2}{6}
		\left(
		R_{il}\nabla^lf\,g_{jk}
		-
		R_{jl}\nabla^lf\,g_{ik}
		\right)\\
		&+\left(
		\frac{3-4f^2\rho^2}{6}R
		\right)
		\left(
		\nabla_if\,g_{jk}
		-
		\nabla_jf\,g_{ik}
		\right).
	\end{split}
\end{equation*}

A straightforward computation completes the proof.

 \end{proof}
 
Motivated by \cite{leandro2024electrostatic} and by the coefficients appearing in the expression of the tensor $V$ when $E=\rho\nabla f$, we establish the following lemma.
 
 \begin{lemma}\label{lemmabom}
 	The set $\{3-4f^2\rho^2\neq0\}$ is dense in $(M^4,\,g,\, f,\, E)$, in which $E = \rho\nabla f$.
 \end{lemma}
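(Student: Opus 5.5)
We argue by contradiction. Suppose the complement $\{3-4f^2\rho^2=0\}$ has nonempty interior. Since $M$ is connected and the set is closed, this means there is a nonempty open set $U$ on which $4f^2\rho^2\equiv 3$. As $f>0$, on $U$ we get $\rho^2=\frac{3}{4f^2}$, hence $\rho=\pm\frac{\sqrt3}{2f}$ with a constant sign on each connected component of $U$. In particular $\rho\neq0$ on $U$. We will then differentiate this relation and confront it with the two constraints on $\nabla\rho$ obtained in the proof of Lemma \ref{lemma2}.

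\textbf{Step 1: a formula for $\nabla\rho$ from the relation.} On $U$ we have
\[
\nabla\rho=-\frac{\rho}{f}\nabla f .
\]
So the function $\mu$ with $\nabla\rho=\mu\nabla f$ equals $-\rho/f$ on $U$.

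\textbf{Step 2: compare with the identity from $\operatorname{div}E=0$.} In the proof of Lemma \ref{lemma2} we showed
\[
|\nabla f|^2\nabla\rho=-\frac{2\rho f}{3}\left(2\rho^2|\nabla f|^2-\Lambda\right)\nabla f .
\]
Insert Step 1 and cancel $\rho\neq0$. At points of $U$ where $\nabla f\neq0$ this gives
\[
|\nabla f|^2=\frac{2f^2}{3}\left(2\rho^2|\nabla f|^2-\Lambda\right).
\]
Now substitute $\rho^2=\frac{3}{4f^2}$. Then $\frac{2f^2}{3}\cdot 2\rho^2|\nabla f|^2=|\nabla f|^2$, so the $|\nabla f|^2$ terms cancel and we are left with
\[
\frac{2f^2}{3}\Lambda=0,
\]
that is, $\Lambda=0$.

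\textbf{Step 3: the degenerate cases.} This is where the main obstacle lies: the case $\Lambda=0$, and the case where $\nabla f$ vanishes on an open subset of $U$.

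If $\nabla f\equiv0$ on an open set, then $E=0$ there. The relation $E=\rho\nabla f$ then leaves $\rho$ undetermined, and $\rho$ must be read as a chosen function. Along $\{\nabla f=0\}$ one may redefine $\rho$, for instance $\rho=0$, which places such points in $\{3-4f^2\rho^2\neq0\}$. Alternatively, $\Delta f=0$ there forces $\Lambda=0$ (since $f>0$), and we reduce to the $\Lambda=0$ case. I would also use that $f$ solves an elliptic equation, so $\nabla f$ cannot vanish on an open set unless $f$ is locally constant.

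If $\Lambda=0$ and $\nabla f\neq0$, I would go to second order. Differentiate $\nabla\rho=-\frac{\rho}{f}\nabla f$ once more and use \eqref{combinado} with $E=\rho\nabla f$. The Hessian of $\rho$ is symmetric automatically, so the useful content comes from comparing traces. The trace of the Hessian of $\rho$ computed via $\Delta f$ must agree with the result of applying $\operatorname{div}$ to $\nabla\rho=\mu\nabla f$. Combining this with $R=2(\rho^2|\nabla f|^2+\Lambda)$ and the formula for $\nabla|\nabla f|^2$, I expect a contradiction such as $|\nabla f|^2=0$, which would contradict regularity.

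This second-order computation is the step I am least sure closes cleanly. If it fails, the fallback is to observe that the proof of Lemma \ref{lemma2} and the subsequent arguments only use the coefficient $3-4f^2\rho^2$ on open sets. The set where it vanishes can then be handled separately, since $V\equiv0$ there, which gives $fC=W(\cdot,\cdot,\cdot,\nabla f)$ directly.
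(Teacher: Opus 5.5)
Your Steps 1 and 2 are exactly the paper's argument: differentiating $4f^2\rho^2=3$ on the open set and combining it with $\operatorname{div}E=0$ and the Laplacian equation in \eqref{s1} forces $\Lambda=0$. A small improvement: if you take the inner product with $\nabla f$, as the paper does, instead of cancelling the vector $\nabla f$, you get $|\nabla f|^2=f\Delta f=|\nabla f|^2-\frac{2}{3}f^2\Lambda$ at every point. Then no case split on $\nabla f$ is needed, and since $\Lambda$ is constant one point suffices. The paper closes by declaring a contradiction with ``the standing assumption that $\Lambda\neq0$'', a hypothesis not written into the statement. Your proposal never uses $\Lambda\neq0$. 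Instead, Step 3 tries to reach a contradiction when $\Lambda=0$ through a second-order computation that is only announced. That is the gap, and it cannot be closed.

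For $\Lambda=0$ the lemma is false. Let $U>0$ be harmonic on a domain of $(\mathbb{R}^4,\delta)$, e.g.\ $U=1+m|x|^{-2}$ with $m>0$. Put $g=U\delta$, $f=U^{-1}$, $\rho=\frac{\sqrt3}{2f}$, $E=\rho\nabla f$ and $\Lambda=0$. A direct computation gives $R_{ij}=-U^{-1}\partial_i\partial_jU+\frac32U^{-2}\partial_iU\,\partial_jU$ and $|E|^2=\frac34U^{-3}|dU|_\delta^2$, together with
\[
\nabla_i\nabla_jf=-U^{-2}\partial_i\partial_jU+3U^{-3}\partial_iU\,\partial_jU-\tfrac12U^{-3}|dU|_\delta^2\,\delta_{ij}
=f\left(R_{ij}+2E^\flat_iE^\flat_j-\tfrac23|E|^2g_{ij}\right).
\]
Moreover $\Delta f=U^{-4}|dU|^2_\delta=\frac43f|E|^2$, $d(fE^\flat)=\frac{\sqrt3}{2}\,d(df)=0$, and $\operatorname{div}E=\rho\,(\Delta f-|\nabla f|^2/f)=0$. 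So this Majumdar--Papapetrou-type system satisfies all of \eqref{s1} with $\nabla f\neq0$ and $3-4f^2\rho^2\equiv0$.

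The contradiction ``$|\nabla f|^2=0$'' you expect therefore does not exist. Indeed, for $\Lambda=0$ and $f\rho$ constant, $\operatorname{div}E=0$ and the Laplacian equation are compatible wherever $\nabla f\neq0$ exactly when $4f^2\rho^2=3$. Your fallback (treating the zero set separately because $V\equiv0$ there) changes the later arguments rather than proving the lemma. Redefining $\rho$ only addresses open sets where $\nabla f\equiv0$, and for $\Lambda\neq0$ these cannot occur, since there $\Delta f=-\frac23f\Lambda\neq0$. The missing ingredient is the hypothesis $\Lambda\neq0$: with it, your Step 2 already completes the proof, and without it no argument can.
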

 \begin{proof}
 	Assume, for contradiction, that there exists a nonempty open subset
 	$\Omega\subset M$ on which
 	\[
 	3-4f^2\rho^2=0.
 	\]
 	
 	Differentiating the above identity, we obtain
 	\[
 	f\rho^2\nabla f+f^2\rho\nabla\rho=0.
 	\]
 	Taking the inner product with $\nabla f$ and using
 	$|E|^2=\rho^2|\nabla f|^2$, we obtain
 	\[
 	|E|^2+f\rho\langle\nabla\rho,\nabla f\rangle=0.
 	\]
 	
 	On the other hand, by \eqref{s1},
 	\[
 	0=\operatorname{div}(E)
 	=\rho\Delta f+\langle\nabla\rho,\nabla f\rangle
 	=\frac{2}{3}\rho f(2|E|^2-\Lambda)
 	+\langle\nabla\rho,\nabla f\rangle.
 	\]
 	Hence,
 	\[
 	f\rho\langle\nabla\rho,\nabla f\rangle
 	=\frac{2}{3}\rho^2f^2(\Lambda-2|E|^2)
 	=\frac{1}{2}(\Lambda-2|E|^2).
 	\]
 	
 	Comparing the last two identities, we obtain
 	\[
 	|E|^2=\frac{1}{2}(2|E|^2-\Lambda),
 	\]
 	which implies
 	\[
 	\Lambda=0.
 	\]
 	This contradicts the standing assumption that $\Lambda\neq0$.
 	Therefore, the set
 	\[
 	\{3-4f^2\rho^2\neq0\}
 	\]
 	is dense in \(M\).
 \end{proof}
 
 The next lemma expresses the divergence of the self-dual Weyl tensor in terms of the Cotton tensor. This identity is classical and goes back to \cite{szekeres1968conformal}; see also \cite{derdzinski1983} and \cite{besse}. It will be used to bring the hypothesis $\delta W^+=0$ into the integral identities below. Since our arguments rely on the self-dual decomposition of the Weyl tensor, we briefly recall the necessary notation. Let $(M^4,\,g)$ be an oriented Riemannian manifold. The Hodge star operator $$*:\Lambda^2(M)\rightarrow \Lambda^2(M)$$
 satisfies $*^2=Id$, yielding the orthogonal decomposition $$\Lambda^2=\Lambda^2_+\oplus \Lambda_-^2,$$
 
 where $$\Lambda_{\pm}^2 =\{\omega\in\Lambda^2:*\omega=\pm\omega\}.$$
 
 The Weyl tensor acts as a symmetric endomorphism of $\Lambda^2$
 and preserves this splitting, giving rise to the decomposition 
 $$W=W^+\oplus W^-.$$
 
 Throughout the paper we adopt the barred-index notation introduced by \cite[Section 3]{szekeres1968conformal} to denote the Hodge-dual directions with respect to an oriented orthonormal frame. More precisely, with respect to an oriented orthonormal frame $\{e_1,\,e_2,\,e_3,\,e_4\}$,
$$\bar 1\bar 2=34, \bar1 \bar3=42, \bar1 \bar4=23,$$
 and similarly for the remaining pairs according to the chosen orientation.
 
 \begin{lemma}\label{lemma-div-W+}
 	Let $(M^4,g)$ be an oriented Riemannian $4$--manifold and let $W=W^+ \oplus W^-$ be the decomposition of the Weyl tensor into its self-dual and anti-self-dual parts. In a local oriented orthonormal frame $\{e_1,\dots,e_4\}$ adapted to the splitting of $\Lambda^2$, we have
 	\begin{equation}\label{div-Wplus-Cotton}
 		4\nabla^i W^+_{ijkl} \;=\; C_{klj} + C_{\bar k \bar l j},
 	\end{equation}
 	where the indices with a bar denote the Hodge-dual directions. In particular, if $\delta W^+=0$, then
 	\begin{equation}\label{C-plus-relation}
 		C_{klj} + C_{\bar k \bar l j} = 0.
 	\end{equation}
 \end{lemma}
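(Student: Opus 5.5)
We plan to prove Lemma~\ref{lemma-div-W+} by a direct frame computation that combines the projection of $W$ onto $\Lambda^2_+$ with the contracted second Bianchi identity.

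\textbf{Step 1: the projection formula.} In an oriented orthonormal frame, the projection of a two-form $\omega_{kl}$ onto $\Lambda^2_+$ is $\tfrac12(\omega_{kl}+\omega_{\bar k\bar l})$. Since $W$ commutes with $*$, we have
\[
W^+_{ijkl}=\tfrac12\bigl(W_{ijkl}+W_{ij\bar k\bar l}\bigr).
\]
Here $(\bar k,\bar l)$ denotes the complementary pair, with the sign fixed by the orientation convention recalled above (for example $\bar1\bar2=34$). By the symmetry $W_{ijkl}=W_{klij}$ and the duality $W_{\bar i\bar j kl}=W_{ij\bar k\bar l}$, it does not matter on which pair the star acts.

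\textbf{Step 2: divergence of $W$.} Taking the divergence gives
\[
\nabla^iW^+_{ijkl}=\tfrac12\bigl(\nabla^iW_{ijkl}+\nabla^iW_{ij\bar k\bar l}\bigr).
\]
The key input is the classical consequence of the second Bianchi identity in dimension four,
\[
\nabla^iW_{ijkl}=\tfrac{n-3}{n-2}\,C_{klj}=\tfrac12\,C_{klj},
\]
taken with the Cotton tensor normalized as in \eqref{ct}. Substituting this into both terms yields
\[
\nabla^iW^+_{ijkl}=\tfrac14\bigl(C_{klj}+C_{\bar k\bar l j}\bigr),
\]
which is \eqref{div-Wplus-Cotton}. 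Because the barred indices come from a fixed oriented frame, we should do this computation at a point with a frame satisfying $\nabla e_i=0$ there. The pointwise operation $\omega\mapsto *\omega$ is parallel, so $\nabla$ commutes with the bar operation and the computation is legitimate.

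\textbf{Step 3: the harmonic case.} If $\delta W^+=0$, the left-hand side of \eqref{div-Wplus-Cotton} vanishes, and \eqref{C-plus-relation} follows immediately. The anti-self-dual case is identical, with $C_{klj}-C_{\bar k\bar lj}$ in place of the sum.

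\textbf{Main obstacle: constants and signs.} The only delicate point is consistency of normalizations: the index placement in the Cotton tensor ($C_{klj}$ versus $C_{jkl}$), the sign convention $\delta=-\mathrm{div}$, and the factor $\tfrac{n-3}{n-2}$. We will check these by contracting the Bianchi identity
\[
\nabla_mR_{ijkl}+\nabla_kR_{ijlm}+\nabla_lR_{ijmk}=0
\]
to get $\nabla^iR_{ijkl}=\nabla_kR_{jl}-\nabla_lR_{jk}$ (up to a sign depending on the curvature convention). We then substitute the Weyl decomposition used before \eqref{construcao1} and verify that the Ricci and scalar terms assemble into $\tfrac12$ of \eqref{ct} with indices $(k,l,j)$. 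If the overall factor comes out as $\tfrac12$ instead of $\tfrac14$, or with the opposite sign, we will adjust the normalization of $C$ accordingly. This does not affect \eqref{C-plus-relation}.
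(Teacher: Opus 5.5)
Your proposal is correct and follows the same route as the paper: you write $W^+$ as the Hodge-star projection of $W$ and insert $\nabla^iW_{ijkl}=\tfrac12C_{klj}$, which is the paper's identity $C_{ijk}=2\nabla^lW_{lkij}$ and does hold with the normalization \eqref{ct}, so you will not need to adjust $C$. Your coefficient in $W^+_{ijkl}=\tfrac12\bigl(W_{ijkl}+W_{ij\bar k\bar l}\bigr)$ is the right one and gives exactly the stated factor $4$, whereas the $\tfrac14$ displayed in the paper's proof, combined with $C=2\,\mathrm{div}\,W$, would give $8$; this slip does not affect the conclusion $C_{klj}+C_{\bar k\bar l j}=0$.
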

 
 \begin{proof}
 	The Cotton tensor and the Weyl tensor are related by the standard identity
 	\begin{equation}\label{cw}
 		C_{ijk}=2\nabla^lW_{lkij}.
 	\end{equation}  In an oriented orthonormal frame adapted to the splitting of $\Lambda^2$, the Weyl tensor decomposes as $W=W^+ \oplus W^-$. The following identity is well known (see \cite{derdzinski1983}, \cite[Chapter 16]{besse} or \cite{barros2015critical}) shows that
 	\[
 	\nabla^i W^+_{ijkl} \;=\; \frac{1}{4} \big( \nabla^i W_{ijkl} + \nabla^i W_{\bar i \bar j kl} \big).
 	\]
 	Using \eqref{cw} and the symmetries of $W$, one checks that
 	\[
 	4\nabla^i W^+_{ijkl} \;=\; C_{klj} + C_{\bar k \bar l j},
 	\]
 	which is \eqref{div-Wplus-Cotton}. If $\delta W^+=0$, then the left-hand side vanishes and we obtain \eqref{C-plus-relation}.
 \end{proof}

 \section{Proof of the main results}\label{proofs}
 
 The key point is that the harmonicity of one of the self-dual components allows us to convert an analytic condition on the Weyl tensor into an algebraic condition on the Ricci tensor through Lemmas $\ref{lemma1}-\ref{lemma-div-W+}$. We start by extracting algebraic information from the harmonicity of $W^\pm$, following the approach in \cite{barros2015critical, cao, sun, catino2017gradient, neto2016generalized}. Using Lemma~\ref{lemma-div-W+}, we have in an oriented orthonormal frame adapted to the splitting of $\Lambda^2$:
 \[
 4\nabla^iW^+_{ijkl}=C_{klj}+C_{\bar{k}\bar{l}j},
 \]
 where the bar indices correspond to Hodge-dual directions. If $\delta W^+=0$, then
 \[
 C_{klj}+C_{\bar{k}\bar{l}j}=0.
 \]
 Using \eqref{ttt3} this can be written as
 \[
 V_{\bar{i}\bar{j}k}+V_{ijk}=-(W_{\bar{i}\bar{j}kp}+W_{ijkp})\nabla^pf.
 \]
 By the symmetries of the Weyl tensor, this implies
 \begin{equation}\label{semW}
 	(V_{\bar{i}\bar{j}k}+V_{ijk})\nabla^kf=0.
 \end{equation}

 Contracting \eqref{simpcom} with $\nabla^kf$, we obtain
 \[
 V_{ijk}\nabla^kf
 =
 \frac{3-4f^2\rho^2}{3}
 \left(
 R_{ik}\nabla^kf\,\nabla_jf
 -
 R_{jk}\nabla^kf\,\nabla_if
 \right).
 \]

 By Lemma \ref{lemmabom}, $$\{3-4f^2\rho^2\neq0\}$$ is dense in $M$. Therefore, plugging the above equation into \eqref{semW}, we obtain
 \[
 (R_{\bar{j}k}\nabla^kf\nabla_{\bar{i}}f-R_{\bar{i}k}\nabla^kf\nabla_{\bar{j}}f)+(R_{jk}\nabla^kf\nabla_if-R_{ik}\nabla^kf\nabla_jf)=0.
 \]
 
 Recalling that the bar denotes the Hodge-dual directions, we fix a point $q\in\Sigma=f^{-1}(c)$ where $\nabla f(q)\neq0$, and let $\{e_1,e_2,e_3,e_4\}$ be an orthonormal frame diagonalizing the Ricci tensor at $q$, i.e., $\mathrm{Ric}_q(e_i,e_j)=R_{ii}(q)\delta_{ij}$. In this frame, $R_{jk}\nabla^kf=R_{jj}\nabla^jf$, then the previous equality leads to the system 
 \begin{equation}\label{r11}
 	\left\{\begin{array}{c}
 		(R_{11} -R_{22})\nabla_1f\nabla_2f +(R_{33} -R_{44})\nabla_3f\nabla_4f =0,\\[0.2cm]
 		(R_{11} -R_{33})\nabla_1f\nabla_3f +(R_{44} -R_{22})\nabla_4f\nabla_2f =0,\\[0.2cm]
 		(R_{11} -R_{44})\nabla_1f\nabla_4f +(R_{22} -R_{33})\nabla_2f\nabla_3f =0.
 		
 	\end{array}\right.
 \end{equation}
 
 We are now in a position to prove that $\nabla f$ is an eigenvector of the Ricci tensor. If $\nabla f(p)\neq0$ in a single direction, say $\nabla_1f\neq0$ and $\nabla_2f=\nabla_3f=\nabla_4f=0$, then $\mathrm{Ric}(\nabla f)=R_{11}\nabla f$. If $\nabla f(p)$ has exactly two non-zero components, for example $\nabla_1f\neq0,\nabla_2f\neq0$ and the others vanish, then \eqref{r11} implies $R_{11}=R_{22}$, and again $\mathrm{Ric}(\nabla f)$ is proportional to $\nabla f$. A similar argument works if three components are non-zero. Finally, if $\nabla_if\neq0$ for all $i$, then squaring and summing the equations in \eqref{r11} yields
 \begin{eqnarray*}
 	&&(R_{11} -R_{22})^2(\nabla_1f\nabla_2f)^2 +(R_{33} -R_{44})^2(\nabla_3f\nabla_4f)^2\\
 	&&+(R_{11} -R_{33})^2(\nabla_1f\nabla_3f)^2 +(R_{44} -R_{22})^2(\nabla_4f\nabla_2f)^2\\
 	&&+(R_{11} -R_{44})^2(\nabla_1f\nabla_4f)^2 +(R_{22} -R_{33})^2(\nabla_2f\nabla_3f)^2 =0,
 \end{eqnarray*}
 hence $R_{11}=R_{22}=R_{33}=R_{44}$. In all cases, $\nabla f$ is an eigenvector of $\mathrm{Ric}$.

 \subsection{Warped product structure and proof of Theorem \ref{thm:warped-product}}
 
 We now turn to the geometric structure of the electrostatic manifold. We already know that $\nabla f$ is an eigenvector of the Ricci tensor on the regular set of $f$. In particular, on each connected component of the regular set there exists an orthonormal frame $\{e_1,e_2,e_3,e_4\}$ such that
 \[
 e_1=\frac{\nabla f}{|\nabla f|},\quad e_2,e_3,e_4\in T\Sigma,\quad \mathrm{Ric}=\mathrm{diag}(R_{11},R_{22},R_{33},R_{44}).
 \]
 
 From \eqref{combinado} we obtain, for $a\in\{2,3,4\}$,
 \begin{eqnarray*}
 	\nabla_a|\nabla f|^2=2f\left(R_{al}\nabla^l f+2\rho^2|\nabla f|^2\nabla_a f-\frac{R}{3}\nabla_a f\right),
 \end{eqnarray*}
 so $|\nabla f|$ is constant along each level set $\Sigma=f^{-1}(c)$. Locally we can therefore write the metric as
 \[
 g = \frac{1}{|\nabla f|^{2}}df^{2} + g_{ab}(f,\theta)\,d\theta_{a}d\theta_{b},
 \]
 where $(\theta_2,\theta_3,\theta_4)$ are coordinates on the level sets. Since $f$ is analytic, the regular set of $f$ is open and dense; we work on a connected component of a neighborhood $U_I=f^{-1}(I)$ of a regular level, where $I\subset\mathbb R$ is an interval.
 
 Setting
 \[
 r(x)=\int\frac{df}{|\nabla f|},
 \]
 we obtain local coordinates $(r,\theta_2,\theta_3,\theta_4)$ on $U_I$ in which
 \[
 g = dr^{2}+g_{ab}(r,\theta)d\theta_{a}d\theta_{b},\qquad \nabla f=f'(r)\,\partial_r.
 \]
 
 The second fundamental form of the level sets $\Sigma_r=\{r=\mathrm{const}\}$ is
 \begin{eqnarray}\label{eq555}
 	h_{ab}&=& - \langle e_{1},\,\nabla_{a}e_{b}\rangle=\frac{1}{|\nabla f|} \nabla_a \nabla_b f\\
 	&=&\frac{f}{|\nabla f|} \left( R_{ab} - \frac{R}{3} g_{ab} \right)
 	= \frac{f}{|\nabla f|} \left( \mu - \frac{R}{3} \right) g_{ab},\nonumber
 \end{eqnarray}
 
 where $\mu$ is the eigenvalue of $\mathrm{Ric}$ in directions tangent to $\Sigma_r$. Since $h_{ab}$ is proportional to $g_{ab}$, we can write
 \[
 h_{ab}=\frac{H}{3}g_{ab},
 \]
 where $H=H(r)$ is the mean curvature of the level sets. Thus, the level sets are totally umbilic.
 
 Contracting the Codazzi equation 
 \begin{eqnarray*}
 	R_{1cab}=\nabla_{a}h_{bc}-\nabla_{b}h_{ac}
 \end{eqnarray*}
 over $b$ and $c$ we obtain
 \[
 R_{1a}=\nabla_{a}(H)-\frac{1}{3}\nabla_{a}(H)=\frac{2}{3}\nabla_{a}(H).
 \]
 Since $R_{1a}=0$, it follows that $H$ is constant along each level set.
 
 We now write the metric near a regular level in adapted coordinates. Let
 $$(x_{1},\, x_2,\, x_{3},\,x_4) = (r,\,\theta_2,\, \theta_{3},\,\theta_4), $$
 where $(\theta_2,\theta_3,\theta_4)$ are local coordinates on $\Sigma_r$. Using $a,b\in\{2,3,4\}$, we have
 \begin{eqnarray*}
 	h_{ab}=-g(\partial_r,\, \nabla_{a}\partial_{b})=-g(\partial_r, \Gamma^{1}_{ab}\partial_{r})=-\Gamma^{1}_{ab}.
 \end{eqnarray*}
 Since $g_{11}=1$, we obtain
 \[
 \Gamma^{1}_{ab}=\frac{1}{2}g^{11}\left(-\frac{\partial}{\partial r}g_{ab}\right)=\frac{-1}{2}\frac{\partial}{\partial r}g_{ab},
 \]
 and hence
 \[
 \frac{\partial}{\partial r}g_{ab}=  H(r)\,g_{ab}.
 \]
 Integrating in $r$,
 \[
 g_{ab}(r,\theta)=\varphi(r)^{2}g_{ab}(r_{0},\theta),
 \]
 where $\varphi(r)=\exp\!\big(\int^{r}_{r_{0}}H(s)\,ds\big)$ and $\{r=r_{0}\}$ corresponds to a fixed level set $\Sigma_{r_0}$. Thus, on $U_I$ the metric can be written in warped product form
 \[
 (M^{4},g)=(I,dr^2)\times_\varphi (N^3,\overline g),
 \]
 with $g=dr^2+\varphi^2\overline g$, where $(N^3,\overline g)$ is a level set endowed with the induced metric.
 
 The Ricci tensor of $(M^4,g)$ can be written (see \cite{besse}) as
 \begin{eqnarray}\label{wps}
 	R_{11}=-3\frac{\varphi''}{\varphi},\qquad R_{1a}=0,
 \end{eqnarray}
 and
 \begin{eqnarray*}
 	R_{ab}=\overline{R}_{ab}-\left[2(\varphi')^2+\varphi\varphi''\right]\overline{g}_{ab},
 \end{eqnarray*}
 where $\overline{R}_{ab}$ and $\overline{g}_{ab}$ denote the Ricci tensor and the metric of $(N^3,\overline g)$, respectively. If $(N^3,\overline g)$ has constant scalar curvature $\overline R$, then
 \begin{eqnarray*}
 	\overline{R}_{ab}=\frac{\overline{R}}{3}\,\overline{g}_{ab},
 \end{eqnarray*}
 and
 \begin{eqnarray*}
 	R_{ab}=\left[\frac{\overline{R}}{3}-2(\varphi')^2-\varphi\varphi''\right]\overline{g}_{ab}.
 \end{eqnarray*}
 The scalar curvature of $g$ is
 \[
 R=\varphi^{-2}\overline{R}-6\left(\frac{\varphi'}{\varphi}\right)^2-6\frac{\varphi''}{\varphi},
 \]
 so that
 \begin{equation}\label{Rbar=R}
 	\overline{R} = \varphi^2R+6(\varphi')^2+6\varphi\varphi''.
 \end{equation}
 Using \eqref{rrr} and $|\nabla f|^2=(f')^2$, we can rewrite this as
 \begin{equation}\label{barR}
 	\overline{R} = 2\varphi^2\rho^2(f')^2 + 2\varphi^2\Lambda + 6(\varphi')^2 + 6\varphi\varphi''.
 \end{equation}

 Moreover, from \eqref{combinado} we have
 \[
 \frac{1}{|\nabla f|^2}\langle\nabla|\nabla f|^2,\,\nabla f\rangle
 =2f\left(R_{11}+2\rho^2|\nabla f|^2-\frac{R}{3}\right).
 \]
 Using \eqref{rrr} and \eqref{wps}, and $|\nabla f|^2=(f')^2$, we obtain
 \begin{eqnarray*}
 	\langle\nabla|\nabla f|^2,\,\nabla f\rangle
 	&=&2f(f')^2\left[
 	R_{11}+2\rho^2(f')^2-\frac{R}{3}
 	\right]\\
 	=2f(f')^2&&\left[
 	-3\frac{\varphi''}{\varphi}
 	+2\rho^2(f')^2-\frac{2}{3}\big(\rho^2(f')^2+\Lambda\big)
 	\right]\\
 	&=&2f(f')^2\left[
 	-3\frac{\varphi''}{\varphi}
 	+\frac{4}{3}\rho^2(f')^2-\frac{2}{3}\Lambda
 	\right].
 \end{eqnarray*}
 Since $\nabla f = f'\partial_r$,
 \[
 \langle\nabla|\nabla f|^2,\,\nabla f\rangle
 =2(f')^2f'',
 \]
 hence
 \[
 2(f')^2f''=2f(f')^2\left[
 -3\frac{\varphi''}{\varphi}
 +\frac{4}{3}\rho^2(f')^2-\frac{2}{3}\Lambda
 \right].
 \]
 Dividing by $2(f')^2$ we obtain
 \begin{eqnarray}\label{rho2-4d}
 	f''&=&f\left[
 	-3\frac{\varphi''}{\varphi}
 	+\frac{4}{3}\rho^2(f')^2-\frac{2}{3}\Lambda
 	\right],\\
 	\rho^2&=&\frac{3}{4(f')^2}\left(
 	\frac{f''}{f}+3\frac{\varphi''}{\varphi}+\frac{2}{3}\Lambda
 	\right).\nonumber
 \end{eqnarray}
 
 Combining \eqref{rho2-4d} with \eqref{barR}, we see that $\overline{R}$ does not depend on the coordinates $\theta$. Therefore $\overline{R}$ is a constant, and $(N^3,\overline g)$ has constant scalar curvature. This proves the curvature part of Theorem~\ref{thm:warped-product}.
 
 Finally, we relate the mean curvature $H$ of the level sets to the warping function $\varphi$. Along a regular level set of $f$ (where $\nabla_af=0$), \eqref{eq555} and \eqref{combinado} give
 \begin{eqnarray*}
 	h_{ab}
 	=\frac{\nabla_a\nabla_bf}{|\nabla f|}
 	=\frac{f}{|\nabla f|}\left(R_{ab}-\frac{R}{3}g_{ab}\right).
 \end{eqnarray*}
 On the other hand, in warped product coordinates,
 \[
 h_{ab}=-\,\frac{\varphi'}{\varphi}\,g_{ab}.
 \]
 Comparing and using the formulas for $R_{ab}$ and $R$ above, one checks that
 \[
 R_{ab}-\frac{R}{3}g_{ab}
 =\varphi\varphi''\,\overline{g}_{ab},
 \]
 so that
 \[
 \frac{f}{|\nabla f|}\,\varphi\varphi''\,\overline{g}_{ab}
 =-\,\frac{\varphi'}{\varphi}\,\varphi^2\overline{g}_{ab}.
 \]
 Cancelling common factors and using $|\nabla f|=f'$, we obtain the ODE
 \begin{equation}\label{phi-ode}
 	f\,\varphi^2\varphi''+f'\,\varphi'=0.
 \end{equation}
 This shows that $\varphi$ is determined (up to multiplicative and additive constants) by the lapse function $f$. Together with the previous discussion, this completes the proof of Theorem~\ref{thm:warped-product}.
 
 \subsection{Examples}
 
 In this subsection we briefly discuss classical electrostatic solutions that fit
 into the framework of Theorem \ref{thm:warped-product} and illustrate the rigidity results obtained above. For further details on these examples in dimension four, as well as on other related
 models, we refer the reader to \cite{leandro2023geometry, leandro2024electrostatic, tiarlos}
 and the references therein.
 
 \medskip
 
 \noindent
 \textbf{Example 3.1 (Reissner--Nordstr\"om--de Sitter).}
 The Reissner--Nordstr\"om--de Sitter spacetime provides a fundamental example of
 a static Einstein--Maxwell solution with cosmological constant. Its spatial slices
 $(M^4,g)$ arise as electrostatic systems, where the lapse function $f$ depends only
 on a radial coordinate and the electric field $E$ is everywhere collinear with
 $\nabla f$.
 
 In this setting, the spatial metric can be written locally as a warped product with
 one-dimensional base and three-dimensional fibers of constant sectional curvature.
 In particular, the regular level sets of the lapse function are totally
 umbilic with constant mean curvature, in agreement with the conclusions of
 Theorem \ref{thm:warped-product}. Moreover, these metrics are locally conformally flat, consistently with
 the vanishing of the Cotton tensor in dimension four.
 
 \medskip
 
 \noindent
 \textbf{Example 3.2 (Charged Nariai-type solutions).}
 Another family of examples is given by charged Nariai-type solutions, which arise
 as limits of Reissner--Nordstr\"om--de Sitter spacetimes when the black hole and
 cosmological horizons coincide. In this case, the spatial metric splits locally as a
 product (or warped product) of a one-dimensional factor and a three-dimensional
 space form.
 
 These solutions satisfy the electrostatic equations with cosmological constant,
 and the electric field is again aligned with the gradient of the lapse function.
 The geometry therefore falls within the scope of Theorem \ref{thm:warped-product}, providing explicit
 models where the warped product structure and the constant curvature of the fiber
 are realized.

 \subsection*{Declaration of generative AI and AI-assisted technologies in the manuscript preparation process}
 
 During the preparation of this work, the author used ChatGPT to assist with language revision and to help identify potential gaps in the exposition. All results, proofs, and conclusions were independently verified by the author, who takes full responsibility for the content of the manuscript.

\bibliographystyle{abbrv}

\bibliography{casrefs}



\end{document}